\documentclass[11pt,a4paper]{article}

\usepackage{epsf,epsfig,amsfonts,amsgen,amsmath,amstext,amsbsy,amsopn,amsthm,cases,listings,color}
\usepackage{ebezier,eepic}
\usepackage{color}
\usepackage{multirow}
\usepackage{epstopdf}
\usepackage{graphicx}   
\usepackage{pgf,tikz}
\usepackage{mathrsfs}
\usepackage[marginal]{footmisc}
\usepackage{enumerate}
\usepackage{enumitem}
\usepackage[titletoc]{appendix}
\usepackage{booktabs}
\usepackage{url}
\usepackage{mathtools}
\usepackage{pdflscape} 
\usepackage{pgfplots}
\usepackage{authblk}
\usepackage{amssymb}
\usepackage{wasysym}
\usepackage{empheq}
\usepackage{dsfont}
\usepackage{tikz}
\usepackage{longtable}
\usepackage{float}
\usepackage{subfigure}
\pgfplotsset{compat=1.18}
\usepackage{mathrsfs}
\usepackage{wasysym} 
\usetikzlibrary{arrows}
\usepackage{aligned-overset}
\usepackage{bm}%for vector
\usepackage{bbm}%for vector
\usepackage[T1]{fontenc}%for polish hook
\usepackage{amsmath}

\allowdisplaybreaks[1]

\newtheorem{definition}{Definition} [section]
\newtheorem{theorem}[definition]{Theorem}
\newtheorem{lemma}[definition]{Lemma}
\newtheorem{proposition}[definition]{Proposition}

\newtheorem{claim}[definition]{Claim}
\newtheorem{problem}[definition]{Problem}

\begin{document}
\title{\bf\Large On cliques in hypergraphs under bounded $(j,p)$-norm\thanks{Research supported by National Key R\&D Program of China (Grant No. 2023YFA1010202). }}
\author{
Jianfeng Hou\thanks{
Emails: jfhou@fzu.edu.cn},\quad \quad 
Hongbin Zhao\thanks{
Emails: hbzhao2024@163.com },
\\
\small Center for Discrete Mathematics, Fuzhou University, Fujian, 350108, China
}
\date{}
\maketitle
%%%%%%%%%%%%%%%%%%%%%%%%%%%%%%%%%%%%%%%%%%%%%
\begin{abstract}
Let $\mathcal{H}$ be an $r$-uniform hypergraph. For $S\in \binom{V(\mathcal{H})}{j}$, let $\deg(S)$ be the number of edges of $\mathcal{H}$ containing $S$, and define the $(j,p)$-norm of $\mathcal{H}$ by
\[
\|\mathcal{H}\|_{j,p}=\left(\sum_{S\in \binom{V(\mathcal{H})}{j}}\deg(S)^p\right)^{1/p}.
\]
Motivated by a problem of Chao, Dong, Shen and Yang, we determine the maximum number of $t$-cliques in an $n$-vertex $r$-graph with bounded $(j,p)$-norm in the range $p>(t-j)/(r-j)$. The proof uses an entropy argument adapted to hypergraphs, together with a continuous interpolation step. The bound is sharp whenever the corresponding Steiner systems exist.

\medskip
\end{abstract}

%%%%%%%%%%%%%%%%%%%%%%%%%%%%%%%%%%%%%%%%%%%%%
\section{Introduction}

For a graph $G$, let $k_t(G)$ denote the number of  $t$-cliques $K_t$ in $G$. Estimating $k_t(G)$ under natural edge-density conditions is a classical problem in extremal graph theory. Tur\'an's theorem implies that any graph on $n$ vertices with more edges than the Tur\'an graph contains a copy of $K_{t}$. More generally, determining the minimum possible value of $k_t(G)$ among graphs whose edge density exceeds the Tur\'an threshold is the \emph{clique supersaturation problem}. This problem was completely resolved through the work of Razborov~\cite{R08} for triangles, Nikiforov~\cite{N11} for $4$-cliques, and Reiher~\cite{R16}, who proved the Lov\'asz--Simonovits clique density conjecture~\cite{LS83} in full generality.

Another direction is to maximize $k_t(G)$ over graphs $G$ subject to specific parameter constraints. A typical example concerns graphs with a fixed number of edges, where the extremal maximum is precisely governed by the classical Kruskal--Katona theorem~\cite{K63,K66}. At the other extreme, when both the number of vertices and the maximum degree $\Delta$ of $G$ are fixed, Gan, Loh, and Sudakov~\cite{GLS15} conjectured that $k_t(G)$ is maximized by a disjoint union of cliques of size $\Delta+1$ (together with at most one smaller clique). This conjecture, which reformulates earlier questions on independent sets~\cite{G11,EG14,CR14}, was fully resolved by Chase~\cite{C20}; subsequently, Chao and Dong~\cite{CD22} provided an alternative proof.

The number of edges and the maximum degree of a graph correspond naturally to the $\ell_1$-norm and the $\ell_\infty$-norm of its degree sequence, respectively. This suggests the study of extremal problems  under $\ell_p$-norm constraints on the degree sequence. For a real number $p > 0$, the $p$-norm of $G$, denoted by $||G||_p$ is defined as 
\begin{align*}
    ||G||_p = \left( \sum_{v \in V} \deg(v)^p \right)^{1/p}, 
\end{align*}
where $\deg(v)$ denotes the \emph{degree} of $v$ in $G$.

The systematic study of $\ell_p$-norms in extremal graph theory originated in Tur\'an-type problems. While classical Tur\'an problems ask for the maximum number of edges in an $n$-vertex graph with prescribed forbidden subgraphs, equivalently maximizing the $\ell_1$-norm of the degree sequence, Caro and Yuster~\cite{CY00,CY04} extended this framework to $\ell_p$-norms. Erd\H{o}s--Stone-type results in this setting were subsequently established by Bollob\'{a}s--Nikiforov~\cite{BN12}.

Recently, Chao, Dong, Shen, and Yang~\cite{CDSY24} investigated the maximum number of $t$-cliques in graphs with bounded $\ell_p$-norm, establishing asymptotically sharp upper bounds both with and without a fixed number of vertices.

In this note, we study the maximum number of $t$-cliques in hypergraphs with a fixed number of vertices and bounded $(j,p)$-norm. For $r\ge 2$ an $r$-uniform hypergraph (henceforth an $r$-graph) $\mathcal{H}$ is a collection of $r$-subsets of some finite set $V$. For an integer $j$ with $1\le j <r $ and a real number $p > 0$, the \emph{degree} of a $j$-subset $S \in \binom{V}{j}$, denoted by $\deg(S)$, is defined as the number of edges in $\mathcal{H}$ containing $S$. Following the definitions in \cite{CILLP24,CDSY24}, the \emph{$(j, p)$-norm} of $\mathcal{H}$ is defined as
\begin{align*}
    \|\mathcal{H}\|_{j,p} = \left( \sum_{S \in \binom{V}{j}} \deg(S)^p \right)^{1/p}.
\end{align*}
Note that if $j = r - 1$, this framework seamlessly recovers the  $\ell_p$-norm $\|\mathcal{H}\|_p$ of $\mathcal{H}$.

Similarly, the study of $(j,p)$-norms in hypergraphs originated in Tur\'an-type problems. Balogh, Clemen, and Lidick\'{y}~\cite{BCL22a,BCL22b} initiated this direction by investigating Tur\'an-type problems for $3$-graphs under $\ell_2$-norm constraints. Notably, they resolved the Tur\'an tetrahedron problem under the $\ell_2$-norm using computer--assisted flag algebra computations, a powerful tool introduced by Razborov~\cite{Raz07}. Subsequently, Chen, Iľkovi\v{c}, Le\'on, Liu, and Pikhurko \cite{CILLP24} initiated a systematic study of Tur\'an-type problems under general $(j,p)$-norm constraints. See~\cite{CL24,GLMP25,BDHLZ25,HLZ25,BCD+25,BL26,ZH26} for further results in this active direction.

A natural extension of Tur\'an-type problems is to estimate the number of $t$-cliques $k_t^r(\mathcal{H})$ in an $r$-graph $\mathcal{H}$ with bounded $(j,p)$-norm. Very recently, Chao, Dong, Shen, and Yang~\cite{CDSY24} established a tight upper bound for $k_t(\mathcal{H})$ under a fixed $(j,p)$-norm. 

To state their result, we first introduce some notation. For a real number $x\ge 0$, define
\begin{align*}
    \binom{x}{k} = \frac{x(x-1)\cdots(x-k+1)}{k!}.
\end{align*}
When $p>\frac{t-j}{r-j}$, let
\begin{align*}
    \tilde{h}(x) = \frac{\binom{x}{t}}{\binom{x}{j}\binom{x-j}{r-j}^p}.
\end{align*}
Let $\widetilde s_R$ be the unique real number maximizing $\widetilde h(x)$ on the interval $(t-1,\infty)$. As shown by Chao, Dong, Shen, and Yang \cite[Section 5]{CDSY24}, such a number exists and is unique. Moreover, $\widetilde h(x)$ is strictly increasing on $(t-1,\widetilde s_R]$ and strictly decreasing on $[\widetilde s_R,\infty)$. 

\begin{theorem}[Chao--Dong--Shen--Yang \cite{CDSY24}] \label{thm:CDSY}
Suppose $C > 0$. Let $H$ be an $r$-uniform hypergraph with $\|\mathcal{H}\|_{j,p} \le C$.
\begin{enumerate}
    \item If $0 < p \le \frac{t-j}{r-j}$, then $k_t^r(\mathcal{H}) \le \max\left\{\binom{u}{t}, 0\right\}$, where $u$ is the positive real with $C = \sqrt[p]{\binom{u}{j}\binom{u-j}{r-j}}$.
    \item If $p > \frac{t-j}{r-j}$, then $k_t^r(\mathcal{H}) \le C^p \cdot \widetilde h(\widetilde s_R)$.
\end{enumerate}
\end{theorem}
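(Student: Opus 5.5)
We prove the theorem by a fractional, entropy-style double counting that reduces both parts to one-variable inequalities in the local ``clique size'' at each $j$-set. Assume $k_t^r(\mathcal{H})>0$; otherwise there is nothing to prove. For a $j$-set $S$, let $k(S)$ be the number of $t$-cliques containing $S$. Each $t$-clique contains $\binom{t}{j}$ such sets, so
\[
k_t^r(\mathcal{H})=\binom{t}{j}^{-1}\sum_S k(S).
\]
The key local step is a Kruskal--Katona-type bound in the link of $S$: the link $\mathcal{H}_S$ is an $(r-j)$-graph with $\deg(S)$ edges, and the $t$-cliques through $S$ correspond to $(t-j)$-cliques of $\mathcal{H}_S$. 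Writing $\deg(S)=\binom{x_S-j}{r-j}$ with real $x_S\ge r$, Lov\'asz's version of Kruskal--Katona applied to the clique complex gives
\[
k(S)\le \binom{x_S-j}{t-j}.
\]
A cleaner alternative is the entropy argument the abstract advertises. Take a uniformly random ordered clique $(v_1,\dots,v_t)$ and use Shearer's lemma to compare $\log k_t$ with averages over $j$-prefixes of $\log\deg$. This yields a weighted inequality of the same shape, namely $\binom{x-j}{t-j}$ versus $\binom{x-j}{r-j}$ after a continuous interpolation.

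Combining, we get
\[
k_t^r(\mathcal{H})\le\binom{t}{j}^{-1}\sum_S\binom{x_S-j}{t-j},
\qquad
\sum_S\binom{x_S-j}{r-j}^p\le C^p .
\]
Note that $\binom{t}{j}^{-1}\binom{x-j}{t-j}=\binom{x}{t}/\binom{x}{j}$. So it suffices to maximize $\sum_S \binom{x_S}{t}/\binom{x_S}{j}$ subject to $\sum_S \binom{x_S-j}{r-j}^p\le C^p$. Each term then satisfies
\[
\frac{\binom{x_S}{t}}{\binom{x_S}{j}}=\tilde h(x_S)\binom{x_S-j}{r-j}^p .
\]

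\textbf{Part 2 ($p>\frac{t-j}{r-j}$).} By the unimodality of $\tilde h$ recalled above, $\tilde h(x_S)\le\tilde h(\tilde s_R)$ for every $S$ with $x_S>t-1$. Sets with $x_S\le t-1$ contribute $0$ to the clique count. Summing gives $k_t^r\le C^p\,\tilde h(\tilde s_R)$ directly.

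\textbf{Part 1 ($0<p\le\frac{t-j}{r-j}$).} We need a global argument, since now the ratio $\binom{x}{t}/\binom{x}{j}$ grows at least as fast as $\binom{x-j}{r-j}^p$. Put $y_S=\binom{x_S-j}{r-j}^p$. We claim $\phi(y)=\binom{x}{t}/\binom{x}{j}$ is convex and superadditive in $y$ with $\phi(0)=0$. This is the growth-exponent comparison $(t-j)/(p(r-j))\ge1$ made rigorous for falling factorials, probably by checking that $\log\phi$ is convex in $\log y$ together with $\phi(y)/y$ increasing. Superadditivity then gives $\sum_S\phi(y_S)\le\phi(\sum_S y_S)\le\phi(C^p)$. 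Each single term obeys $y_S\le\binom{u}{j}^{-1}\cdots$, which does not directly give $\binom{u}{t}$. We therefore instead use that all $x_S$ are bounded by the clique number, and also that the number of $j$-sets with positive $k(S)$ is at least $\binom{x_{\max}}{j}$. Balancing these two facts yields the extremal configuration: a single clique on $u$ vertices with all $x_S=u$.

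The main obstacle is this last balancing in Part 1. The naive per-set bound loses the factor $\binom{u}{j}$, and recovering $\binom{u}{t}$ needs the hypergraph Kruskal--Katona inequality applied globally (total cliques versus the total $(j,p)$-mass) rather than per link. We would try the entropy/Shearer formulation first, as it handles the global and local levels simultaneously. The continuous-interpolation convexity of $\phi$ is the technical lemma we expect to cost the most computation.
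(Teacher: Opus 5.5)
The paper never proves this theorem; it only cites it. The relevant comparison is therefore the entropy argument of Section~\ref{sec:upp}, which is the fixed-$n$ version of the Chao--Dong--Shen--Yang proof.

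\textbf{Part 2 is correct, by a genuinely different and more elementary route.} Your argument has three steps.
\begin{enumerate}
\item The map $K\mapsto K\setminus S$ injects $t$-cliques through $S$ into $(t-j)$-cliques of the link.
\item The clique form of Lov\'asz's Kruskal--Katona theorem then gives $k(S)\le\binom{x_S-j}{t-j}$, and $k(S)=0$ unless $x_S\ge t$.
\item The identity $\binom{t}{j}^{-1}\binom{x-j}{t-j}=\tilde h(x)\binom{x-j}{r-j}^p$ and the maximality of $\tilde h(\tilde s_R)$ on $(t-1,\infty)$ let you sum over $S$ to get $k_t^r(\mathcal H)\le\tilde h(\tilde s_R)\,\|\mathcal H\|_{j,p}^p$.
\end{enumerate}
This needs no entropy and no interpolation. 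The entropy route instead bounds $AB^p$ via the Jensen step of Claim~\ref{cla:AB_upper}, uses the gap lemma (Lemma~\ref{lem:cdsy}), and introduces the auxiliary variable $x'$.

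\textbf{Part 1 has a genuine gap, and the per-$j$-set relaxation cannot close it.} You pass to maximizing $\sum_S\phi(y_S)$ under the single constraint $\sum_S y_S\le C^p$. That forgets that a $t$-clique through $S$ forces all $\binom tj$ of its $j$-subsets to have large degree. The relaxation therefore allows putting all the mass on one $j$-set, worth $\phi(C^p)$, and this exceeds $\binom ut$.

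Concretely, take $r=2$, $j=1$, $t=3$, $p=1$, $C=u(u-1)$. The relaxation permits $\frac13\binom{u(u-1)}{2}\approx u^4/6$ triangles. In general $\phi(C^p)$ has order $u^{\,t-j+j(t-j)/(p(r-j))}\ge u^t$.

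Your proposed repair is also false as stated. $x_S$ is defined from $\deg(S)$ alone, so take one $t$-clique plus a huge star on one of its $j$-subsets. Then $x_{\max}$ is arbitrarily large, yet only $\binom tj$ $j$-sets have $k(S)>0$. Shearer-type averaging of $\log\deg(S)$ over prefixes loses the same information.

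\textbf{The missing idea is a coupling between how many $j$-sets carry the cliques and how large their links are; the entropy chain supplies it.} Write $(y)_m=y(y-1)\cdots(y-m+1)$ and take $A,B,C_{\mathrm{tail}}$ as in Section~\ref{sec:upp}. Normalize $C^p=\binom uj\binom{u-j}{r-j}^p$, as your extremal example $K^{(r)}_u$ presupposes; the $p$-th root in the displayed definition of $u$ should cover only $\binom uj$.
\begin{enumerate}
\item Claim~\ref{cla:AB_upper} gives $AB^p\le (u)_j\,((u-j)_{r-j})^p$.
\item Define $x'$ by $B=(x'-j)_{r-j}$. Exactly as in the paper, the gaps $x_k\ge x_{k+1}+1$ give $C_{\mathrm{tail}}\le(x'-r)_{t-r}$ and $x'\ge t$.
\item The same gaps also force $x_{j+1}\ge x'-j$, hence $A\ge (x')_j$. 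This is precisely the global information your link decomposition discards. By monotonicity it yields $x'\le u$.
\item Then $t!\,k_t^r(\mathcal H)=AB^p\cdot B^{1-p}C_{\mathrm{tail}}\le AB^p\cdot\frac{t!}{j!(r-j)!^p}\,\tilde h(x')$.
\item For $p\le\frac{t-j}{r-j}$, $\tilde h$ is nondecreasing on $(t-1,\infty)$. Its logarithmic derivative is $\sum_{i=j}^{t-1}\frac1{x-i}-p\sum_{i=j}^{r-1}\frac1{x-i}\ge0$, because an increasing sequence has a larger average over a longer initial window.
\item So $\tilde h(x')\le\tilde h(u)$, giving $k_t^r(\mathcal H)\le\binom ut$. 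If $u<t$, the chain $t\le x'\le u$ is impossible, so there are no $t$-cliques.
\end{enumerate}
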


They also  remarked that it would be interesting to consider the case where $\mathcal{H}$  has a fixed number of vertices. This fixed-vertex variant is natural from the extremal point of view. 

\begin{problem}[Chao--Dong--Shen--Yang \cite{CDSY24}] \label{prob:CDSY}
For integers $r\ge 2, t> r, n$ and a real number $C$, what is the maximum possible number of $t$-cliques in an $n$-vertex $r$-graph $\mathcal{H}$ with $\|\mathcal{H}\|_{j,p}\le C$?
\end{problem}

For Problem~\ref{prob:CDSY}, when $0 < p \le \frac{t-j}{r-j}$, the upper bound  in Theorem~\ref{thm:CDSY}~(i) continues to hold under the additional constraint  that the number of vertices is  fixed at  $n$. Indeed, any meaningful value of $C$ satisfies 
$C^p \le \binom{n}{j}\binom{n-j}{r-j}^p$, which by monotonicity forces $n \ge u$. 
By the same argument, the bound in Theorem~\ref{thm:CDSY}~(ii) holds when 
$p > \frac{t-j}{r-j}$ and $C^p < \binom{n}{j} \binom{\tilde{s}_{\mathbb{R}}-j}{r-j}^p$. Thus the main unresolved part of Problem~\ref{prob:CDSY} is the high-norm range in which the fixed vertex constraint interacts nontrivially with the local $j$-degree constraint.
For the remaining case, we have

\begin{theorem}\label{thm:hypergraph_fixed_vertex}
Let $t, r, j, n$ be fixed integers such that $n\ge t > r > j \ge 1$, and let $C>0$ and $p>0$ be real numbers satisfying 
\[
p > \frac{t-j}{r-j} \quad \text{ and } \quad C^p \ge \binom{n}{j} \binom{\tilde{s}_{\mathbb{R}}-j}{r-j}^p
\]
If $\mathcal{H}$ is an $n$-vertex $r$-graph with $\|\mathcal{H}\|_{j,p}\le C$, then 
\[
k_t^{(r)}(\mathcal{H}) \le \frac{\binom{n}{j}}{\binom{u}{j}} \binom{u}{t},
\]
where $u \ge \tilde{s}_{\mathbb{R}}$ is the unique real number satisfying $\binom{n}{j}^{1/p} \binom{u-j}{r-j} = C$.
\end{theorem}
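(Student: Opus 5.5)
We follow the entropy approach of Chao--Dong--Shen--Yang, adapted to a fixed vertex set, and finish with a one-variable optimization (the ``continuous interpolation''). Write $N=\binom{n}{j}$ and $k=k_t^{(r)}(\mathcal{H})$, and assume $k>0$. Let $X=(X_1,\dots,X_t)$ be a uniformly random ordered $t$-clique, so that $H(X)=\log(t!\,k)$. The goal is a \emph{local} inequality: for each $j$-set $S$, define $x_S\ge r-1$ (or $x_S=0$ when $\deg(S)=0$) by $\binom{x_S-j}{r-j}=\deg(S)$. We aim to show
\[
k\le\sum_{S}\frac{\binom{x_S}{t}}{\binom{x_S}{j}},
\]
that is, the number of $t$-cliques is at most a sum of ``fractional clique contributions'' of the $j$-sets. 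The entropy version runs as follows. By the chain rule,
\[
H(X)=H(X_1,\dots,X_j)+\sum_{i>j}H(X_i\mid X_1,\dots,X_{i-1}).
\]
Given $S=\{X_1,\dots,X_j\}$, the remaining vertices $X_{j+1},\dots,X_t$ form a clique in the link $(r-j)$-graph of $S$, which has $\deg(S)$ edges. Applying the Kruskal--Katona/Lov\'asz-type entropy bound (as in \cite{CDSY24}) to that link shows that the conditional entropy of the ordered completion is at most $\log\big((t-j)!\binom{x_S-j}{t-j}\big)$. Averaging over $S$ and optimizing the distribution of $S$ (Gibbs' inequality) then gives
\[
t!\,k\le\sum_S j!\binom{x_S}{j}^{-1}\cdots,
\]
which after simplification is exactly the displayed sum. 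I expect this step, pinning down the correct entropy inequality so that it produces precisely $\binom{x}{t}/\binom{x}{j}$ per $j$-set, to be the main obstacle. My first attempt will be to double count: each $t$-clique contains $\binom{t}{j}$ $j$-sets, and for each $S$ the number of cliques through $S$ is at most $\binom{x_S-j}{t-j}$ by Lov\'asz's form of Kruskal--Katona in the link. This yields $k\le\sum_S\binom{x_S-j}{t-j}/\binom{t}{j}$, and since
\[
\frac{\binom{x-j}{t-j}}{\binom{t}{j}}=\frac{\binom{x}{t}}{\binom{x}{j}},
\]
this may already give the local inequality without any entropy.

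Next comes the interpolation/optimization step. Set $f(x)=\binom{x}{t}/\binom{x}{j}$ and $g(x)=\binom{x-j}{r-j}^p$. We have $\sum_S g(x_S)\le C^p$ over $N$ terms, and we want to maximize $\sum_S f(x_S)$. Define $\phi(y)=f(g^{-1}(y))$; then $f/g=\widetilde h$, so $\phi(y)=y\,\widetilde h(g^{-1}(y))$. The plan is to prove that $\phi$ lies below its tangent line at $y_0=g(u)$ on $[0,\infty)$, i.e.
\[
\phi(y)\le\phi(y_0)+\phi'(y_0)(y-y_0)\quad\text{for all } y\ge 0,
\]
using $u\ge\widetilde s_R$ and the unimodality of $\widetilde h$ recorded earlier. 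Heuristically, on $[g(\widetilde s_R),\infty)$ the function $\phi$ is concave, while below that point the ratio $\phi(y)/y=\widetilde h$ is at most $\widetilde h(\widetilde s_R)$, and this ratio bound controls the tangent line there. Summing the tangent inequality over $S$ and using $\sum_S y_S\le C^p=N y_0$ (together with $\phi'(y_0)\ge 0$) gives
\[
k\le N\phi(y_0)=N\,\frac{\binom{u}{t}}{\binom{u}{j}}.
\]
The concavity/tangent check is a second technical point. If global concavity fails, I will fall back on a two-region argument. For $y_S$ with $x_S\le\widetilde s_R$, bound $f\le\widetilde h(\widetilde s_R)\,y_S$. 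For the remaining terms, apply Jensen's inequality to the concave part. The last step is to verify, via the tangent slope $\phi'(y_0)\ge\widetilde h(u)$ (which holds because $\widetilde h$ is decreasing past $\widetilde s_R$), that mixing the two regions never beats the uniform choice $x_S=u$.
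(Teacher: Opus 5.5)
Your strategy is sound and genuinely different from the paper's, but it needs three repairs: the local bound needs a truncation, the slope inequality in the optimization step has the wrong sign, and the concavity claim is unproved.

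\textbf{Repairs.}
\begin{enumerate}
\item \emph{Truncation.} Use only the double-counting version of the local bound; drop the garbled entropy paragraph. It must read $k\le\sum_S\max\{f(x_S),0\}$, because $\binom{x}{t}<0$ for $x\in(t-2,t-1)\subseteq[r-1,\infty)$. This is harmless, since a $j$-set lying in a $t$-clique has $\deg(S)\ge\binom{t-j}{r-j}$, i.e.\ $x_S\ge t$. Consequently $\phi\equiv 0$ on $[0,g(t-1)]$, followed by a convex kink. So global concavity really fails, and what you need is the supporting-line inequality $\phi\le\ell$, where $\ell(y)=\phi(y_0)+\phi'(y_0)(y-y_0)$.
\item \emph{Sign of the slope.} Your inequality goes the wrong way. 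In fact $\phi'(y_0)=\widetilde h(u)+y_0\,\widetilde h'(u)\,(g^{-1})'(y_0)\le\widetilde h(u)$, because $\widetilde h'(u)\le 0$ for $u\ge\widetilde s_R$. This is exactly what you need: it says $\ell(0)=y_0(\widetilde h(u)-\phi'(y_0))\ge 0$. If instead $\phi'(y_0)>\widetilde h(u)$, the tangent inequality would already fail at $y=0$.
\item \emph{Concavity on $[g(\widetilde s_R),\infty)$.} This must be proved, and the proof is short. For $y=g(x)$,
\[
\phi'(y)=\frac{f'(x)}{g'(x)}=\widetilde h(x)\cdot\frac{\sum_{i=j}^{t-1}(x-i)^{-1}}{p\sum_{i=j}^{r-1}(x-i)^{-1}} .
\]
The first factor is positive and decreasing on $[\widetilde s_R,\infty)$. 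The second is decreasing on $(t-1,\infty)$, because $(x-i)/(x-i')$ increases in $x$ whenever $i>i'$.
\end{enumerate}

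\textbf{Completing the argument.} By concavity, $\phi\le\ell$ on $[g(\widetilde s_R),\infty)$. On $[0,g(\widetilde s_R)]$ you have $\phi(y)\le\widetilde h(\widetilde s_R)\,y$. The affine function $\ell(y)-\widetilde h(\widetilde s_R)y$ is nonnegative at $y=0$ (by the corrected slope bound) and at $y=g(\widetilde s_R)$ (by concavity), hence on the whole interval. Now sum $\phi(y_S)\le\ell(y_S)$ over all $S$. Using $\phi'(y_0)\ge 0$ (as $f$ increases on $[t-1,\infty)$) and $\sum_S y_S\le C^p=Ny_0$ gives $k\le N\phi(y_0)=\binom{n}{j}\binom{u}{t}/\binom{u}{j}$.

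\textbf{Comparison with the paper.} With these repairs your proof is a genuinely different route. The paper never localizes to individual $j$-sets. It takes a random ordered $t$-clique, bounds the entropy of the ordered $j$-prefix by $\log_2(n(n-1)\cdots(n-j+1))$, and couples the prefix with the $(r-j)$-extension via Jensen to get $j!(r-j)!^p\sum_S\deg(S)^p$. It then controls the tail using the Chao--Dong--Shen--Yang spacing lemma $x_k-x_{k+1}\ge 1$, an auxiliary real $x'$ with $B=\prod_{i=j}^{r-1}(x'-i)$, and the monotonicity of $\widetilde h$ beyond $\widetilde s_R$.

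You replace all of this with Lov\'asz's real-valued Kruskal--Katona theorem in each link plus a one-variable supporting-line argument. This buys several things:
\begin{itemize}
\item a per-$j$-set inequality that is tight on the Steiner-system construction;
\item Theorem~\ref{thm:CDSY}(ii) for free, by summing $\phi(y)\le\widetilde h(\widetilde s_R)\,y$;
\item a transparent reason for the threshold: $u\ge\widetilde s_R$ is precisely the condition for the tangent at $y_0$ to have nonnegative intercept, and the fixed vertex set enters only through the $N$ copies of that intercept.
\end{itemize}
The paper's route avoids Kruskal--Katona and stays parallel to the graph argument of Chao--Dong--Shen--Yang.
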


The main difficulty in proving Theorem~\ref{thm:hypergraph_fixed_vertex} is that the graph argument of Chao--Dong--Shen--Yang does not extend directly to $r$-graphs. In the hypergraph setting, the first exposed object is naturally a $j$-set rather than a single vertex, and its entropy must be controlled simultaneously by the global bound $|V(\mathcal H)|=n$ and by the $(j,p)$-norm of the local degrees. We overcome this by decomposing the entropy of a random ordered $t$-clique into a $j$-prefix, an $(r-j)$-extension, and a tail, and then using a continuous interpolation argument to connect the resulting product inequalities with the monotonicity of $\widetilde h$.

The following proposition shows that the upper bound in Theorem \ref{thm:hypergraph_fixed_vertex} is the best possible under certain number-theoretic conditions. 

\begin{proposition} \label{prop:exact_construction}
    Let $u, t, r, j$ be fixed integers such that $u> t > r > j \ge 1$. 
For sufficiently large $n$ satisfying the necessary divisibility conditions
\begin{equation}\label{necessary-divisibility-condition-n}
    \binom{n-i}{j-i} \equiv 0 \pmod{\binom{u-i}{j-i}} \quad \text{for all } i \in \{0, 1, \dots, j-1\},
\end{equation}
there exists an $n$-vertex $r$-uniform hypergraph $\mathcal{H}^*$ with $\|\mathcal{H}^*\|_{j,p}=\binom{n}{j}^{1/p}\binom{u-j}{r-j}$ such that 
$ k_t^{(r)}(\mathcal{H}^*) = \frac{\binom{n}{j}}{\binom{u}{j}} \binom{u}{t}$.
\end{proposition}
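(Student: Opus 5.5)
The construction is a Steiner system $S(j,u,n)$ with each block replaced by a complete $r$-graph. The divisibility conditions \eqref{necessary-divisibility-condition-n} are exactly the necessary conditions for the existence of a Steiner system $S(j,u,n)$, that is, a family $\mathcal{B}$ of $u$-subsets (blocks) of an $n$-set $V$ such that every $j$-subset of $V$ lies in exactly one block. By Keevash's theorem on the existence of designs (alternatively Glock--K\"uhn--Lo--Osthus), such a system exists for all sufficiently large $n$ satisfying these conditions. I would invoke this result directly; it is the only deep input, and it explains why the statement is restricted to sufficiently large $n$ with the divisibility hypotheses. Counting pairs ($j$-set, block containing it) gives $|\mathcal{B}|=\binom{n}{j}/\binom{u}{j}$.

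Define $\mathcal{H}^*$ on $V$ as the union over $B\in\mathcal{B}$ of the complete $r$-graphs $\binom{B}{r}$. Since $r>j$, any two distinct blocks share fewer than $j$ vertices, because a common $j$-set would lie in two blocks. Hence each edge of $\mathcal{H}^*$ lies in a unique block, and the edge sets coming from different blocks are disjoint. For the norm, fix $S\in\binom{V}{j}$ and let $B$ be the unique block containing it. An edge $e\supseteq S$ lies in some block $B'$ with $S\subseteq B'$, which forces $B'=B$. So $\deg(S)=\binom{u-j}{r-j}$ for every $S$, and therefore $\|\mathcal{H}^*\|_{j,p}^p=\binom{n}{j}\binom{u-j}{r-j}^p$, which is the stated value.

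For the clique count, each block contributes $\binom{u}{t}$ cliques $K_t^{(r)}$, so it suffices to show that every $t$-clique $T$ of $\mathcal{H}^*$ lies inside a single block. This is the main step, and it is where the argument needs care. Pick $S\in\binom{T}{j}$ and let $B$ be the block containing $S$. Suppose $x\in T\setminus B$. Since $t>r$, we can choose an $r$-subset $e\subseteq T$ with $S\cup\{x\}\subseteq e$; this needs $j+1\le r$, which holds. The set $e$ is an edge of $\mathcal{H}^*$, so it lies in some block $B'$. Then $S\subseteq B'$, which forces $B'=B$, and hence $x\in B$, a contradiction. Therefore $T\subseteq B$. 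Since the block containing $T$ is unique, no clique is counted twice, and
\[
k_t^{(r)}(\mathcal{H}^*)=|\mathcal{B}|\binom{u}{t}=\frac{\binom{n}{j}}{\binom{u}{j}}\binom{u}{t}.
\]
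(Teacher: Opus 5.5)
Your proposal is correct and follows essentially the same route as the paper. Both take a Steiner system $S(j,u,n)$ from Keevash's theorem, replace each block by a complete $r$-graph, compute every $j$-degree as $\binom{u-j}{r-j}$ via uniqueness of the block containing a $j$-set, and use the same contradiction (an $r$-set $e$ with $S\cup\{x\}\subseteq e\subseteq T$ forces $x$ into the block of $S$) to show every $t$-clique lies in a single block; your remark that distinct blocks share fewer than $j$ vertices also makes explicit the uniqueness of that block, which the paper only asserts.
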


The rest of this paper is organized as follows. Section \ref{sec:pre} introduces the necessary notation and entropy preliminaries. Section \ref{sec:upp} prove  Theorem \ref{thm:hypergraph_fixed_vertex} using the entropy method. Section \ref{sec:low} presents the extremal constructions using Steiner systems to establish Proposition \ref{prop:exact_construction}.

%%%%%%%%%%%%%%%%%%%%%%%%%%%%%%%%%%%%%%%%%%%%%%%%%%%%%%%%%%%%%%%%%%%%%%%%%%%%%%%%%%%%

%%%%%%%%%%%%%%%%%%%%%%%%%%%%%%%%%%%%%%%%%%%%%%%%%%%%%%%%%%%%%%%%%%%%%%%%%%%%%%%%%%%%%%%%%%%%%
%
\section{Preliminaries} \label{sec:pre}
We prove Theorem \ref{thm:hypergraph_fixed_vertex} using the entropy approach introduced  by Chao--Yu~\cite{CY24}. 
This section outlines the entropy properties used throughout the paper. Let $X$ be a discrete random variable taking values in a finite set $\Omega$. For simplicity, let $p_X(x) := \mathbb{P}(X=x)$ for every $x \in \Omega$. The \emph{support} of $X$ is defined as 
\begin{equation*}
    \mathrm{supp(X)}\coloneqq\{x\in\Omega: p_X(x)>0\}.
\end{equation*}
The \emph{Shannon entropy} \cite{S48} of $X$ is defined as 
\begin{equation*}
    \mathbb{H}(X)\coloneqq -\sum\limits_{x\in \mathrm{supp}(X)}p_X(x)\cdot \log_2{p_X(x)}.
\end{equation*}
The following fundamental upper bound on entropy is widely used.
\begin{proposition}\label{prop:bound}
For any discrete random variable $X$, $\mathbb{H}(X) \le \log_2|\mathrm{supp}(X)|$, where equality holds if and only if the distribution of $X$ is uniform on $\mathrm{supp}(X)$.
\end{proposition}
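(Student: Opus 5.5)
The statement immediately above is Proposition~\ref{prop:bound}, the basic fact that $\mathbb{H}(X)\le \log_2|\mathrm{supp}(X)|$, with equality exactly for the uniform distribution on the support. I would prove it with Jensen's inequality applied to the strictly concave function $\log_2$. Set $S=\mathrm{supp}(X)$ and write
\[
\mathbb{H}(X)=\sum_{x\in S}p_X(x)\log_2\frac{1}{p_X(x)}.
\]
This is the expectation of $\log_2(1/p_X(X))$, where $X$ is drawn from its own distribution and so lands in $S$ almost surely.

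The key steps, in order, are as follows.

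\textbf{Step 1 (Jensen).} Since $\log_2$ is concave on $(0,\infty)$, Jensen's inequality gives
\[
\mathbb{H}(X)\le \log_2\Bigl(\sum_{x\in S}p_X(x)\cdot\frac{1}{p_X(x)}\Bigr)=\log_2|S|.
\]
This is the claimed bound.

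\textbf{Step 2 (sufficiency of uniformity).} If $X$ is uniform on $S$, then $p_X(x)=1/|S|$ for every $x\in S$. A direct substitution gives $\mathbb{H}(X)=\log_2|S|$.

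\textbf{Step 3 (necessity of uniformity).} Jensen's inequality for a strictly concave function is an equality only when the random variable is almost surely constant. Here that random variable is $1/p_X(X)$, and every point of $S$ has positive mass. So equality forces $p_X(x)$ to equal one common value $c$ for all $x\in S$. Since these probabilities sum to $1$, we get $c=1/|S|$, i.e.\ $X$ is uniform on $S$.

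As an alternative that makes the equality case transparent, one can use Gibbs' inequality. The quantity
\[
\log_2|S|-\mathbb{H}(X)=\sum_{x\in S}p_X(x)\log_2\bigl(p_X(x)|S|\bigr)
\]
is the Kullback--Leibler divergence of $X$ from the uniform law on $S$. The elementary bound $\ln y\le y-1$, with equality only at $y=1$, applied with $y=1/(p_X(x)|S|)$, shows this divergence is nonnegative. It is zero only when every $p_X(x)|S|=1$.

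There is no real obstacle here. The only point requiring care is the strictness of the equality case: one must restrict the sums to $S$, so that the terms with $p_X(x)=0$ (which are excluded by the convention in the definition of $\mathbb{H}$) do not break the strict concavity argument.
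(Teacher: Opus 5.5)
Your proof is correct and is the standard argument. The paper gives no proof of this proposition and quotes it as a well-known fact, but your Step 1 is exactly the Jensen step the paper carries out in the proof of Claim~\ref{cla:AB_upper}, with all weights set to $1$; your strict-concavity (or Gibbs) treatment of the equality case is also fine, and the finiteness of $\mathrm{supp}(X)$ that you use implicitly (so that $c=1/|S|$ makes sense) is guaranteed by the paper's standing assumption that $X$ takes values in a finite set $\Omega$.
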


For any random vector $\mathbf{X} = (X_1, \dots, X_n)$, we denote its \emph{joint entropy} by $\mathbb{H}(\mathbf{X}) := \mathbb{H}(X_1, \dots, X_n)$. For two random variables $X$ and $Y$, let $X \mid Y = y$ denote the \emph{conditional random variable of $X$ given $Y = y$}, whose entropy is denoted by $\mathbb{H}(X \mid Y = y)$. The \emph{conditional entropy of $X$ given $Y$} is defined as 
\begin{align*}
    \mathbb{H}(X \mid Y) := \sum_{y \in \mathrm{supp}(Y)} p_Y(y) \mathbb{H}(X \mid Y = y).
\end{align*}
One can easily verify that $\mathbb{H}(X \mid Y) = \mathbb{H}(X, Y) - \mathbb{H}(Y)$, which yields the following chain rule.
\begin{proposition}\label{prop:chain}
Let $X_1, \dots, X_n$ be discrete random variables. Then
$$\mathbb{H}(X_1,\dots,X_n)=\mathbb{H}(X_1)+\mathbb{H}(X_2 \mid X_1)+\dots+\mathbb{H}(X_n \mid X_1,\dots,X_{n-1}).$$
\end{proposition}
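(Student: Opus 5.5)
The chain rule follows by induction on $n$ from the two-variable identity $\mathbb{H}(X\mid Y)=\mathbb{H}(X,Y)-\mathbb{H}(Y)$ mentioned just before the statement. I would first verify that identity directly from the definitions, and then apply it repeatedly to peel off one variable at a time.

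\textbf{The two-variable identity.} For $y\in\mathrm{supp}(Y)$, the conditional variable $X\mid Y=y$ has probabilities $p_{X,Y}(x,y)/p_Y(y)$. Substituting this into the definition gives
\begin{align*}
\mathbb{H}(X\mid Y)&=-\sum_{y}p_Y(y)\sum_{x}\frac{p_{X,Y}(x,y)}{p_Y(y)}\log_2\frac{p_{X,Y}(x,y)}{p_Y(y)}\\
&=-\sum_{(x,y)}p_{X,Y}(x,y)\log_2 p_{X,Y}(x,y)+\sum_{(x,y)}p_{X,Y}(x,y)\log_2 p_Y(y).
\end{align*}
All sums run over $(x,y)\in\mathrm{supp}(X,Y)$. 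The first term is $\mathbb{H}(X,Y)$. In the second term, summing over $x$ for fixed $y$ leaves $\sum_y p_Y(y)\log_2 p_Y(y)=-\mathbb{H}(Y)$. This uses the fact that $(x,y)\in\mathrm{supp}(X,Y)$ forces $y\in\mathrm{supp}(Y)$, so every logarithm is finite. Hence $\mathbb{H}(X\mid Y)=\mathbb{H}(X,Y)-\mathbb{H}(Y)$.

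\textbf{Induction on $n$.} The case $n=1$ is trivial. For $n\ge 2$, apply the identity with $X=X_n$ and $Y=(X_1,\dots,X_{n-1})$, treating the vector $Y$ as a single discrete random variable. This gives
$$\mathbb{H}(X_1,\dots,X_n)=\mathbb{H}(X_1,\dots,X_{n-1})+\mathbb{H}(X_n\mid X_1,\dots,X_{n-1}).$$
Expanding $\mathbb{H}(X_1,\dots,X_{n-1})$ by the induction hypothesis then yields the claimed formula.

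There is no real obstacle here. The only step needing care is the bookkeeping with supports in the first part, which ensures that no $\log_2 0$ terms appear.
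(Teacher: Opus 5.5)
Your proof is correct and takes essentially the same approach as the paper. The paper only remarks that $\mathbb{H}(X\mid Y)=\mathbb{H}(X,Y)-\mathbb{H}(Y)$ is easily verified and yields the chain rule. You supply that verification, including the support bookkeeping, and then the induction that treats $(X_1,\dots,X_{n-1})$ as a single random variable.
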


Furthermore, we will rely on the following crucial lemma regarding the entropy of randomly ordered subsets.
\begin{lemma}[Chao--Dong--Shen--Yang \cite{CDSY24}] \label{lem:cdsy}
     Let $\mathcal{A}$ be a family of $d$-subsets of $[n]$. We sample a set $A \in \mathcal{A}$ uniformly at random, and then uniformly choose a random ordering of the elements of $A$ to form a vector $(X_1, \dots, X_d) \in [n]^d$. Then
\[
2^{\mathbb{H}(X_1)} \geq 2^{\mathbb{H}(X_2|X_1)} + 1 \geq 2^{\mathbb{H}(X_3|X_1, X_2)} + 2 \geq \dots \geq 2^{\mathbb{H}(X_d|X_1, \dots, X_{d-1})} + d - 1.
\]
\end{lemma}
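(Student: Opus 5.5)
The plan is to prove each inequality $2^{\mathbb{H}(X_i\mid X_1,\dots,X_{i-1})}\ge 2^{\mathbb{H}(X_{i+1}\mid X_1,\dots,X_i)}+1$ separately, for $1\le i\le d-1$. First I establish it pointwise for each fixed value of the prefix $P=(X_1,\dots,X_{i-1})$, and then I average over $P$ using a convexity argument. The only structural input is exchangeability. Since the ordering is uniform and independent of $A$, conditioning on $P=x$ leaves the pair $(Y,Z):=(X_i,X_{i+1})$ exchangeable, so $(Y,Z)$ and $(Z,Y)$ have the same conditional law. In particular $Y$ and $Z$ have the same conditional marginal, call it $q=q_x$, and moreover $Y\neq Z$ almost surely.

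\emph{Pointwise step.} Fix $P=x$ and write $h=\mathbb{H}(Y\mid P=x)=-\sum_y q(y)\log_2 q(y)$. For each $y$ in the support, the law $r_y$ of $Z$ given $Y=y$ is supported on $\mathrm{supp}(q)\setminus\{y\}$. On that set, $z\mapsto q(z)/(1-q(y))$ is a probability distribution. By Gibbs' inequality,
\[
\mathbb{H}(Z\mid Y=y)\le -\sum_z r_y(z)\log_2 q(z)+\log_2\bigl(1-q(y)\bigr).
\]
Averaging over $y\sim q$ and using that the marginal of $Z$ is again $q$ gives
\[
\mathbb{H}(Z\mid Y,P=x)\le h+\sum_y q(y)\log_2\bigl(1-q(y)\bigr).
\]
Hence it suffices to show $2^{\sum_y q(y)\log_2(1-q(y))}+2^{\sum_y q(y)\log_2 q(y)}\le 1$, since the second term equals $2^{-h}$. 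This follows from the weighted AM--GM inequality with weights $q(y)$: the first term is at most $\sum_y q(y)(1-q(y))$ and the second is at most $\sum_y q(y)^2$, and these sum to $1$. The degenerate case $|\mathrm{supp}(q)|=1$ cannot occur for $i\le d-1$, because at least two elements of $A$ remain unexposed. This yields $2^{h_x}\ge 2^{g_x}+1$, where $g_x=\mathbb{H}(X_{i+1}\mid X_i,P=x)$.

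\emph{Averaging step.} We have $\mathbb{H}(X_i\mid P)=\mathbb{E}_x h_x$ and $\mathbb{H}(X_{i+1}\mid X_1,\dots,X_i)=\mathbb{E}_x g_x$. The pointwise bound gives $h_x\ge \varphi(g_x)$ with $\varphi(g)=\log_2(2^g+1)$. The function $\varphi$ is convex (it is a softplus function), so Jensen's inequality gives $\mathbb{E}h_x\ge \varphi(\mathbb{E}g_x)$, which is exactly the desired inequality. Chaining over $i$ and adding the constants yields the stated string of inequalities.

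I expect the main obstacle to be the pointwise step: one has to see that the right comparison distribution is $q$ renormalized off $y$, and that exchangeability lets the cross term $-\sum_y q(y)\sum_z r_y(z)\log_2 q(z)$ collapse to $h$. After that, the AM--GM identity and the Jensen averaging are routine.
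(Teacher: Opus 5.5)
The paper gives no proof of this lemma; it is quoted from \cite{CDSY24}, so there is no internal argument to compare against. Your proof is correct and complete, and it is in substance the standard entropy argument behind the cited lemma: a two-variable inequality for an exchangeable pair of distinct values, applied after conditioning on the prefix. The four steps all go through: conditional exchangeability of $(X_i,X_{i+1})$ given the prefix; Gibbs' inequality against $q$ renormalized off $y$, where the cross term collapses to $h$ because the marginal of $Z$ is $q$; weighted AM--GM giving $\prod_y(1-q(y))^{q(y)}+\prod_y q(y)^{q(y)}\le 1$; and Jensen for the convex map $g\mapsto\log_2(2^g+1)$ to average over the prefix.
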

\section{Proof of Theorem \ref{thm:hypergraph_fixed_vertex}} \label{sec:upp}

In this section, we prove Theorem~\ref{thm:hypergraph_fixed_vertex} using ideas from~\cite{CDSY24} with  a continuous interpolation step. Let $t, r, j, n$ be fixed integers such that $n\ge t > r > j \ge 1$, and let  $C, p>0$ be real numbers satisfying 
\[
p > \frac{t-j}{r-j} \quad \text{ and } \quad C^p \ge \binom{n}{j} \binom{\tilde{s}_{\mathbb{R}}-j}{r-j}^p. 
\]
Let  $\mathcal{H}$ be an $n$-vertex $r$-graph with $\|\mathcal{H}\|_{j,p}\le C$.  Suppose for the sake of contradiction that $k_t^{(r)}(\mathcal{H}) > \frac{\binom{n}{j}}{\binom{u}{j}}\binom{u}{t}$. Let $K$ be a $t$-clique chosen uniformly at random from all $t$-cliques of  $\mathcal{H}$, and let $(X_1, \dots, X_t)$ be a uniform random permutation of $V(K)$. 

For $k\in[t]$, set 
\[
x_k \coloneqq 2^{\mathbb{H}(X_k \mid X_1, \dots, X_{k-1})}.
\]
It follows from  Propositions \ref{prop:bound} and \ref{prop:chain} that  
\begin{equation}
x_1 x_2 \cdots x_t = t! k_t^{(r)}(\mathcal{H}) > t! \frac{\binom{n}{j}}{\binom{u}{j}} \binom{u}{t} = j!\binom{n}{j} (u-j)(u-j-1)\cdots(u-t+1). \label{eq:total_product}
\end{equation}

Since $(X_1, \dots, X_j)$ is an ordered $j$-tuple of distinct vertices in $V$, the cardinality of its support is at most $n!/(n-j)!$. By Proposition \ref{prop:bound}, we have 
\begin{equation}
x_1 \cdots x_j = 2^{\mathbb{H}(X_1, \dots, X_j)} \le n(n-1)\cdots(n-j+1) = j!\binom{n}{j}. \label{eq:prefix_bound}
\end{equation}
Dividing \eqref{eq:total_product} by \eqref{eq:prefix_bound}, we obtain the tail bound
\begin{align}\label{ine:tail_bound}
    x_{j+1} x_{j+2} \cdots x_t > (u-j)(u-j-1)\cdots(u-t+1).
\end{align}

To decouple the constraints, we partition the sequence into three segments: the prefix $A := x_1 \cdots x_j$, the extension $B := x_{j+1} \cdots x_r$, and the tail $C_{\mathrm{tail}} := x_{r+1} \cdots x_t$. The following claim shows that $AB^p$ can be bounded in terms of the $j$-degrees of $\mathcal{H}$.

\begin{claim}\label{cla:AB_upper}
 We have    
 \[
 AB^p \le j!(r - j)!^p \sum\limits_{s \in \binom{V}{j}} \deg(s)^p.
 \]
\end{claim}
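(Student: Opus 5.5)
The plan is to combine the chain rule with a Gibbs-type (Jensen) inequality applied to the joint law of the prefix. By Proposition~\ref{prop:chain},
\[
\log_2 A=\mathbb{H}(X_1,\dots,X_j), \qquad \log_2 B=\mathbb{H}(X_{j+1},\dots,X_r\mid X_1,\dots,X_j).
\]
So it suffices to show
\[
\mathbb{H}(X_1,\dots,X_j)+p\,\mathbb{H}(X_{j+1},\dots,X_r\mid X_1,\dots,X_j)\le \log_2\Bigl(j!\,(r-j)!^p\sum_{s\in\binom{V}{j}}\deg(s)^p\Bigr).
\]

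\emph{Step 1: bound the conditional entropy for a fixed prefix.} Fix an ordered $j$-tuple $\mathbf{x}=(x_1,\dots,x_j)$ in the support of $(X_1,\dots,X_j)$, and let $s_{\mathbf{x}}=\{x_1,\dots,x_j\}$. Since $t>r$, the set $\{X_1,\dots,X_r\}$ lies inside the clique $K$, so it is an edge of $\mathcal{H}$ containing $s_{\mathbf{x}}$. Hence, conditioned on $(X_1,\dots,X_j)=\mathbf{x}$, the tuple $(X_{j+1},\dots,X_r)$ is an ordering of $e\setminus s_{\mathbf{x}}$ for some edge $e\ni s_{\mathbf{x}}$. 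Its support therefore has size at most $(r-j)!\deg(s_{\mathbf{x}})$, and this quantity is positive. Proposition~\ref{prop:bound} then gives
\[
\mathbb{H}(X_{j+1},\dots,X_r\mid X_1,\dots,X_j=\mathbf{x})\le \log_2\bigl((r-j)!\deg(s_{\mathbf{x}})\bigr).
\]

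\emph{Step 2: combine the two entropy terms.} Write $q(\mathbf{x})=\mathbb{P}((X_1,\dots,X_j)=\mathbf{x})$. Then the left-hand side above is at most
\[
\sum_{\mathbf{x}\in\mathrm{supp}} q(\mathbf{x})\log_2\frac{\bigl((r-j)!\deg(s_{\mathbf{x}})\bigr)^p}{q(\mathbf{x})}.
\]
By concavity of $\log_2$ (Jensen's inequality, i.e.\ the Gibbs inequality), this is at most
\[
\log_2\sum_{\mathbf{x}\in\mathrm{supp}}\bigl((r-j)!\deg(s_{\mathbf{x}})\bigr)^p.
\]
Here $p>0$ is used only so that multiplying the conditional bound by $p$ preserves the inequality.

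\emph{Step 3: count ordered tuples.} Enlarge the sum to all ordered $j$-tuples of distinct vertices. The terms are nonnegative, so this only increases the sum. Each $j$-set $s$ arises from exactly $j!$ orderings, so the sum is at most $j!\,(r-j)!^p\sum_{s}\deg(s)^p$. Exponentiating gives $AB^p\le j!(r-j)!^p\sum_s\deg(s)^p$, which is the claim.

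The only delicate point is Step 2: the prefix entropy and the weighted conditional entropy must be merged into a single log-sum rather than bounded separately. Bounding them separately, with $A\le j!\binom{n}{j}$ and $B\le (r-j)!\max_s \deg(s)$, would lose the $\ell_p$ structure. The Jensen step is exactly what turns the pair of entropy terms into the $(j,p)$-norm.
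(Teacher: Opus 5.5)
Your proof is correct and follows essentially the same route as the paper. The paper also uses the chain rule, bounds the conditional support by $(r-j)!\deg(s)$ for each prefix, and applies Jensen to merge the two entropy terms into a single log-sum. The only difference is in Step 3: you enlarge the sum directly to all ordered $j$-tuples, whereas the paper first notes that every ordering of a support $j$-set lies in the support and then extends to all $j$-sets. This is a cosmetic simplification, not a different approach.
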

\begin{proof}
 Let $\vec{S} = (X_1, \dots, X_j)$ be the ordered prefix, and let $S \in \binom{V}{j}$ be its underlying set. By 
Proposition \ref{prop:chain}, we have  
\begin{align}\label{EQ:another-form-ABp-THM}
AB^p=2^{\mathbb{H}(\vec{S}) + p\mathbb{H}(X_{j+1},\dots,X_r | \vec{S})}. 
\end{align}
Now, we bound $\mathbb{H}(X_{j+1},\dots,X_r | \vec{S})$. Note that for a  $t$-clique $K$ in $\mathcal{H}$, any $r$-subset of $V(K)$ must be an edge in $\mathcal{H}$. 
Consequently, for any valid prefix $\vec{s} \in \text{supp}(\vec{S})$, the subsequent sequence $(X_{j+1}, \dots, X_r)$ together with $s$ forms an $r$-edge in $\mathcal{H}$, and 
the number of such extensions is at most $(r - j)!\deg(s)$. By Proposition \ref{prop:bound},
\begin{align*}
    \mathbb{H}(X_{j+1}, \dots, X_r \mid \vec{S} = \vec{s}) \le \log_2((r - j)!\deg(s)).
\end{align*}
Taking the expectation over all possible prefixes $\vec{s} \in \text{supp}(\vec{S})$ yields that
\begin{align*}
    \mathbb{H}(X_{j+1}, \dots, X_r \mid \vec{S}) \le \sum_{\vec{s}\in \mathrm{supp}({\vec{S}})} \mathbb{P}(\vec{s}) \log_2((r - j)!\deg(s)).
\end{align*}

Recall the definition of entropy $\mathbb{H}(\vec{S}) = \sum\limits_{\vec{s}\in \mathrm{supp}({\vec{S}})} \mathbb{P}(\vec{s}) \log_2 \frac{1}{\mathbb{P}(\vec{s})}$. Applying Jensen's inequality to the concave function $\log_2(x)$ yields
\begin{align}\label{EQ:upper-bound-ABp-THM}
    \mathbb{H}(\vec{S}) + p\mathbb{H}(X_{j+1}, \dots, X_r \mid \vec{S}) &\le \sum_{\vec{s}\in \mathrm{supp}({\vec{S}})} \mathbb{P}(\vec{s}) \log_2 \left( \frac{((r - j)!\deg(s))^p}{\mathbb{P}(\vec{s})} \right) \notag\\
    &\le \log_2 \left( \sum_{\vec{s}\in \mathrm{supp}({\vec{S}})} ((r - j)!\deg(s))^p \right).
\end{align}
For each underlying $j$-set $s$ appearing in the support of $S$, all $j!$ orderings of $s$ appear in $\mathrm{supp}(\vec S)$. Hence, grouping the terms by their underlying $j$-sets gives 
\begin{equation}\label{EQ:underlying-support-THM}
\sum_{\vec{s}\in \mathrm{supp}(\vec S)} ((r-j)!\deg(s))^p = j!(r-j)!^p \sum_{s\in \mathrm{supp}(S)}\deg(s)^p.
\end{equation}
Since $\deg(s)^p \ge 0$, extending this sum to all subsets in $\binom{V}{j}$ can only increase the right-hand side.  Combining \eqref{EQ:another-form-ABp-THM}, \eqref{EQ:upper-bound-ABp-THM} and \eqref{EQ:underlying-support-THM} yields
\begin{equation*}
    AB^p=2^{\mathbb{H}(\vec{S}) + p\mathbb{H}(X_{j+1},\dots,X_r | \vec{S})} \le j!(r - j)!^p \sum_{s \in \binom{V}{j}} \deg(s)^p,  
\end{equation*}
completing the proof of Claim \ref{cla:AB_upper}. \qedhere
\end{proof}
We derive a conflicting lower bound on $AB^p$ by applying our continuous interpolation to the tail.
\begin{claim} \label{cla:AB_lower}
    $AB^p > j!\binom{n}{j}((u-j)\cdots(u-r+1))^p.$
\end{claim}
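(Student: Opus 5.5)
The plan is to combine the chain of inequalities in Lemma~\ref{lem:cdsy} with the strict product bound \eqref{eq:total_product}, parametrize the extension $B$ by a continuous variable, and finish with the monotonicity of $\widetilde h$ on $[\widetilde s_R,\infty)$.

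Throughout write $(y)_m:=y(y-1)\cdots(y-m+1)$ for real $y$. Each $x_k=2^{\mathbb H(\cdot)}\ge 1$, and $(y-j)_{r-j}$ is continuous and strictly increasing in $y$ for $y\ge r-1$.

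\textbf{Step 1: choose $w$.} Let $w\ge r-1$ be the unique real number with
\[
B=x_{j+1}\cdots x_r=(w-j)_{r-j}.
\]
This is possible because $B\ge 1$.

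\textbf{Step 2: bound the tail by $w$.} Applying Lemma~\ref{lem:cdsy} to the $t$-cliques gives $x_i\ge x_{i+1}+1$ for all $i$. Hence $x_i\ge x_r+(r-i)$ for $j<i\le r$, and therefore
\[
B\ge (x_r+r-j-1)_{r-j}.
\]
By monotonicity of the falling factorial this forces $x_r+r-j-1\le w-j$, that is, $x_r\le w-r+1$. The same chain then gives $x_k\le x_r-(k-r)\le w-k+1$ for every $k>r$. Consequently
\[
C_{\mathrm{tail}}\le (w-r)_{t-r},
\]
and $w\ge t-1$ holds automatically since $x_t\ge1$.

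\textbf{Step 3: compare $w$ with $u$.} Multiplying the bounds on $B$ and $C_{\mathrm{tail}}$ gives $BC_{\mathrm{tail}}\le (w-j)_{t-j}$. Combined with \eqref{ine:tail_bound}, this yields
\[
(w-j)_{t-j}>(u-j)_{t-j}.
\]
Since the falling factorial is increasing on the relevant range, we conclude $w>u\ge\widetilde s_R$.

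\textbf{Step 4: a lower bound on $A$ and the comparison.} From \eqref{eq:total_product},
\[
A=\frac{x_1\cdots x_t}{BC_{\mathrm{tail}}}>\frac{j!\binom nj (u-j)_{t-j}}{(w-j)_{t-j}}.
\]
Therefore
\[
AB^p> j!\binom nj\,(u-j)_{t-j}\cdot\frac{\big((w-j)_{r-j}\big)^p}{(w-j)_{t-j}}.
\]
It remains to show
\[
g(w):=\frac{\big((w-j)_{r-j}\big)^p}{(w-j)_{t-j}}\ \ge\ g(u)=\frac{\big((u-j)_{r-j}\big)^p}{(u-j)_{t-j}}.
\]
Rewriting in terms of binomial coefficients,
\[
g(y)=\frac{(r-j)!^p\,t!}{j!}\cdot\frac{\binom yj\binom{y-j}{r-j}^p}{\binom yt}=\frac{\text{const}}{\widetilde h(y)}.
\]
Since $\widetilde h$ is strictly decreasing on $[\widetilde s_R,\infty)$ and $w>u\ge\widetilde s_R$, we get $g(w)>g(u)$. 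This gives the claim, with strict inequality inherited from \eqref{eq:total_product}.

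\textbf{Main obstacle.} The delicate step is Step~2: turning a single product constraint on $B$ into pointwise bounds $x_k\le w-k+1$ for the tail. This is exactly where the continuous interpolation parameter $w$ is needed. We must also check that all factors stay in the range where the falling factorials are positive and monotone, which is guaranteed by $x_k\ge 1$.
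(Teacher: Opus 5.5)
Your proof is correct and follows essentially the same route as the paper. Your $w$ plays the role of the paper's $x'$, with $B=(w-j)_{r-j}$. You use Lemma~\ref{lem:cdsy} in the same way to get $x_r\le w-r+1$ and $C_{\mathrm{tail}}\le (w-r)_{t-r}$, deduce $w>u\ge\widetilde s_R$ from \eqref{ine:tail_bound}, and finish with the monotonicity of $\widetilde h$; your lower bound on $A$ is just the paper's lower bound on $B^{p-1}/C_{\mathrm{tail}}$ rearranged.

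One harmless slip: in $g(y)=\mathrm{const}/\widetilde h(y)$ the constant should be $j!\,(r-j)!^p/t!$, not $(r-j)!^p\,t!/j!$. This does not affect the argument, since only the positivity of the constant is used.
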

\begin{proof}
Rewriting the entropy product, it follows from \eqref{eq:total_product} that 
\begin{align}\label{EQ:main-ineq-ABp-THM}
    AB^p = (ABC_{\text{tail}}) \frac{B^{p-1}}{C_{\text{tail}}} > \left[ j!\binom{n}{j} (u-j)\cdots(u-t+1) \right] \frac{B^{p-1}}{C_{\text{tail}}}.
\end{align}
To complete the proof, it suffices to give a lower bound for  $\frac{B^{p-1}}{C_{\text{tail}}}$.

Since entropy is non-negative, we naturally have $x_k \ge 1$ for all $k \in [t]$, which guarantees $B \ge 1 $. 
Consider the function $f_j(y)=\prod_{i=j}^{r-1}(y - i)$. Clearly, $f_j(y)$ is strictly  increasing on the interval $[r - 1, \infty)$ with $f_j(r-1)=0$.
There exists a unique $x' \in (r-1, \infty)$ such that 
\begin{align}\label{eq:B}
    B = \prod_{i=j}^{r-1}(x' - i).
\end{align} 

We claim that 
\begin{align}\label{EQ:bound-xr-with-x'-THM}
x_r \le x' - r + 1. 
\end{align}
Indeed, Lemma \ref{lem:cdsy} gives that 
\begin{align}\label{EQ:gap-xk-xk+1-THM}
x_k - x_{k+1} \ge 1 \text { for  } 1\le k< t. 
\end{align}
If  $x_r > x' - r + 1$, then by \eqref{EQ:gap-xk-xk+1-THM},  $x_k \ge x_r + r - k > x' - k + 1$ for every $j+1 \le k \le r$. Taking the product over these terms gives
\begin{align*}
    B = \prod_{k=j+1}^r x_k>\prod_{k=j+1}^r (x'-k+1)=\prod_{i=j}^{r-1}(x'-i),
\end{align*}
which  contradicts \eqref{eq:B}.

Combining \eqref{EQ:bound-xr-with-x'-THM} and \eqref{EQ:gap-xk-xk+1-THM}, we have $x_k\le x'-k+1$ for all $r\le k\le t$. This together with $x_t\ge 1$ yields that $x'\ge t$ and then  
\begin{align} \label{ine:C_tail_bound}
    C_{\text{tail}}=x_{r+1}\cdots x_t \le \prod_{i=r}^{t-1}(x'-i).
\end{align}
By  \eqref{ine:tail_bound}, \eqref{eq:B} and \eqref{ine:C_tail_bound}, we conclude that 
\begin{align*}
    \prod_{i=j}^{t-1}(u-i) < B C_{\text{tail}} \le  \prod_{i=j}^{t-1}(x'-i).
\end{align*}
Since the function $g_j(y)=\prod_{i=j}^{t-1}(y-i)$ is strictly increasing on $(t-1,\infty)$ and $x',u>t-1$, the above inequality implies that $x'>u\ge \tilde{s}_R$. 
Since $p>\frac{t-j}{r-j}>1$, using \eqref{eq:B} and
\eqref{ine:C_tail_bound}, we obtain
\begin{align*}
    \frac{B^{p-1}}{C_{\text{tail}}} \ge \frac{((x'-j)\cdots(x'-r+1))^{p-1}}{(x'-r)\cdots(x'-t+1)} = \frac{j!(r-j)!^p}{t!} \cdot \frac{1}{\tilde{h}(x')}.
\end{align*}

Recall that  $\tilde{h}(x)$ is strictly decreasing on
$[\widetilde s_R,\infty)$. It follows from  $x'>u\ge \widetilde s_R$ that 
\begin{align*}
    \frac{B^{p-1}}{C_{\text{tail}}} > \frac{((u-j)\cdots(u-r+1))^{p-1}}{(u-r)\cdots(u-t+1)}.
\end{align*}

Substituting this into \eqref{EQ:main-ineq-ABp-THM} establishes that 
\begin{equation*}
    AB^p > j!\binom{n}{j}((u-j)\cdots(u-r+1))^p, 
\end{equation*}
completing the proof of Claim \ref{cla:AB_lower}. \qedhere
\end{proof}

Combining Claims \ref{cla:AB_upper} and  \ref{cla:AB_lower}, we thus conclude that
\begin{align*}
    j!(r - j)!^p \sum_{s \in \binom{V}{j}} \deg(s)^p > j!\binom{n}{j}((u-j)\cdots(u-r+1))^p.
\end{align*}
Dividing both sides by $j!(r - j)!^p$, we obtain
\begin{align*}
    \sum_{s \in \binom{V}{j}} \deg(s)^p > \binom{n}{j} \left( \frac{(u-j)\cdots(u-r+1)}{(r-j)!} \right)^p = \binom{n}{j}\binom{u-j}{r-j}^p =C^p.
\end{align*}

This contradicts the norm constraint $\sum\limits_{s \in \binom{V}{j}} \deg(s)^p \le C^p$. This completes the proof of Theorem \ref{thm:hypergraph_fixed_vertex}.

%

%%%%%%%%%%%%%%%%%%%%%%%%%%%%%%%%%%%%%%%%%%%%%%%%%%%%%%%%%%%%%%%%%%%%%%%%%%%%%

%%%%%%%%%%%%%%%%%%%%%%%%%%%%%%%%%%%%%%%%%%%%%%%%%%%%%%%%%%%%%%%%%%%%%%%%%%
\section{Extremal Constructions via Steiner Systems}\label{sec:low}
In this section, we construct the extremal $r$-graph $\mathcal{H}^*$ that exactly achieves the upper bound in Theorem \ref{thm:hypergraph_fixed_vertex}. The extremal structure  is a highly symmetric combinatorial design based on a Steiner system, which naturally unifies and generalizes the disjoint union of cliques known for the $j = 1$ case.

\begin{proof}[Proof of Proposition \ref{prop:exact_construction}]

Fix $j$ and $u$, and let $n$ be a sufficiently large integer satisfying~\eqref{necessary-divisibility-condition-n}. By a result of Keevash~\cite{K14}, there exists a Steiner system $S(j, u, n)=(V, \mathcal{B})$ such that each block $B \in \mathcal{B}$ has size $u$, and every $j$-subset $S \in \binom{V}{j}$ is contained in exactly one block.  We construct the $r$-graph $\mathcal{H}^*$ on $V$ by replacing each block in $(V, \mathcal{B})$ with a complete $r$-graph; that is, the edge set $ E^*$ of $\mathcal{H}^*$ is 
    \begin{align*}
        E^* = \bigcup_{B \in \mathcal{B}} \binom{B}{r}.
    \end{align*}

    First, we verify the $(j, p)$-norm of $\mathcal{H}^*$. For any $S \in \binom{V}{j}$, let $B_S \in \mathcal{B}$ denote the unique block containing $S$. By the definition of $E^*$, the set of edges containing $S$ is entirely restricted to $B_S$: 
    \begin{align*}
        \{e \in E^* : S \subseteq e\} = \left\{e \in \binom{B_S}{r} : S \subseteq e \right\}.
    \end{align*}
    Thus, the degree of every $j$-subset is exactly $\deg_{\mathcal{H}^*}(S) = \binom{|B_S|-j}{r-j} = \binom{u-j}{r-j}$. Evaluating the norm yields
    \begin{align*}
        \|\mathcal{H}^*\|_{j, p}^p = \sum_{S \in \binom{V}{j}} \deg_{\mathcal{H}^*}(S)^p = \sum_{S \in \binom{V}{j}} \binom{u-j}{r-j}^p = \binom{n}{j}\binom{u-j}{r-j}^p.
    \end{align*}

Next, we evaluate the number of $t$-cliques in $\mathcal{H}^*$ by  showing that for every $t$-clique $K$ in $\mathcal{H}^*$, there exists a unique block $B \in \mathcal{B}$ such that $K \subseteq B$.Suppose for contradiction that there exists a $t$-clique $K$ such that $K \not\subseteq B$ for all $B \in \mathcal{B}$. Fix $S \in \binom{K}{j}$, and let $B_a \in \mathcal{B}$ be the unique block containing $S$. Since $K \not\subseteq B_a$, we can choose $v \in K \setminus B_a$. Given $t \ge r \ge j+1$, there exists an $r$-subset $e$ satisfying
    \begin{align*}
        S \cup \{v\} \subseteq e \subseteq K.
    \end{align*}
    Since $K$ is a clique, $e \in E^*$. By the construction of $E^*$ and the Steiner property, there exists a unique block $B_b \in \mathcal{B}$ such that $e \subseteq B_b$. This establishes the following chain of inclusions
    \begin{align*}
        S \subseteq S \cup \{v\} \subseteq e \subseteq B_b.
    \end{align*}
    However, $S \subseteq B_b$ uniquely determines $B_b = B_a$. This implies $v \in e \subseteq B_a$, directly contradicting $v \in K \setminus B_a$.

    Consequently, the set of $t$-cliques in $\mathcal{H}^*$ is partitioned by the blocks in $\mathcal{B}$. Since $|\mathcal{B}| = \binom{n}{j}/\binom{u}{j}$ , we have 
    \begin{equation*}
        k_t^{(r)}(\mathcal{H}^*) = \sum_{B \in \mathcal{B}} k_t^{(r)}(B) = |\mathcal{B}|\binom{u}{t} = \frac{\binom{n}{j}}{\binom{u}{j}}\binom{u}{t}.
    \end{equation*}
    This completes the proof of Proposition \ref{prop:exact_construction}.
\end{proof}

\section{Concluding remarks}

In this note, we  answer the fixed-vertex question raised by Chao--Dong--Shen--Yang in the remaining range
\[
    p>\frac{t-j}{r-j}
    \quad\text{and}\quad
    C^p \ge \binom{n}{j}\binom{\widetilde s_{\mathbb R}-j}{r-j}^p .
\]
The proof isolates the new difficulty caused by the fixed vertex set in the hypergraph setting: the entropy of the initial $j$-set must be coupled with the local $j$-degree norm. This is handled by splitting a random ordered clique into a $j$-prefix, an $(r-j)$-extension, and a tail, and by applying a continuous interpolation step.

The upper bound is sharp whenever the corresponding Steiner systems exist. It would be interesting to understand the exact extremal configurations in the non-divisible cases, and to establish corresponding stability results. Another possible direction is to study analogous counting problems for other fixed hypergraphs under $(j,p)$-norm constraints.

\bibliographystyle{abbrv}
\bibliography{cliqueHypergraph}
\end{document}